\def\Dj{\hbox{D\kern-.73em\raise.30ex\hbox{-}
\raise-.30ex\hbox{}}}
\def\dj{\hbox{d\kern-.33em\raise.80ex\hbox{-}
\raise-.80ex\hbox{\kern-.40em}}}
\newtheorem{theorem}{Theorem}[section]
\newtheorem{example}{Example}
\newtheorem{lemma}[theorem]{Lemma}
\begin{document}

\baselineskip=0.30in

\vspace*{25mm}

 \begin{center}
 {\Large \bf  On the Number of $k-$Matchings in Graphs}

 \vspace{6mm}

 { \bf  Kinkar Ch. Das$^{1}$,  Ali Ghalavand$^{2,}$\footnote{Corresponding author.} and Ali Reza Ashrafi$^2$}

 \vspace{3mm}

 \baselineskip=0.20in
$^{1}${\it Department of Mathematics, Sungkyunkwan University, Suwon, Republic of Korea\/}\\ 
{\rm E-mail:} {\tt  kinkardas2003@gmail.com}\\
\noindent $^2${\it Department of Pure Mathematics, Faculty of Mathematical Sciences, 
  University of Kashan, Kashan 87317--53153, I. R. Iran\/} \\
 {\rm E-mail:} {\tt alighalavand@grad.kashanu.ac.ir,~ashrafi@kashanu.ac.ir}



 \end{center}

\begin{abstract}\noindent
Suppose $G$ is a undirected simple  graph. A $k-$subset of edges in
$G$ without common vertices is called a $k-$matching and the number
of such subsets is denoted by $p(G,k)$. The aim of this paper is to
present exact formulas for $p(G,3)$, $p(G,4)$ and $p(G,5)$ in terms
of some degree-based invariants.
\vskip 3mm

\noindent{\bf Keywords:} Matching, First general Zagreb index, Second Zagreb index, Reformulated Zagreb index.

\vskip 3mm

\noindent{\it 2020 AMS  Subject Classification Number:} Primary:05C07, Secondary: 05C70
\end{abstract}

\section{Introduction}
\label{Sec:1}

Let $H$ be a simple and undirected graph with $n = |V(H)|$ vertices
and $m = |E(H)|$ edges. The $n-$vertex path, cycle, complete and
star graphs are denoted by $P_n$, $C_n$, $K_n$ and $S_n$,
respectively. We also use the notation $K_{p,q}$ to denote a
complete bipartite graph with a bipartition $(A,B)$ of $V(H)$ such
that $|A| = p$ and $|B| = q$. The degree of a given vertex $t$ in
$H$, $d_H{(t)}$, is defined to be the number of edges incident to
$t$ and  $N_H(t)$ denotes the set of all vertices adjacent to $t$.
If $W \subseteq V(H)$, then $H - W$ is the subgraph of $H$ with
$V(H)- W$ as its vertex set  and in such a way that all edges in $H$
connecting these vertices. The girth of a graph $H$, $g(H)$, is
defined as the length of a shortest cycle contained in the graph
$H$. If the graph $H$ does not contain any cycles then its girth is
defined to be infinite.

The first and second Zagreb indices of a graph $H$ are two degree
based invariants denoted by $M_1(H)$ and $M_2(H)$, respectively. The
first Zagreb index is defined as the sum of $d_H{(t)}^2$ overall
vertices $t \in V(H)$ and the second Zagreb index is  the sum of
$d_H{(t)}d_H{(s)}$ over all edges $ts \in E(H)$. These degree-based
graph invariants was introduced by  Gutman and Trinajsti\'c in 1972
\cite{6}. They observed the structure-dependency of total
$\pi-$electron energy $\varepsilon(M)$ of a given molecule $M$ with
the first Zagreb index of its molecular graph. We refer to an
interesting paper of Gutman and Das \cite{64} for a history of the
first Zagreb index till 2004. Mili\'cevi\'c et al. \cite{m2}
introduced the first and second reformulated Zagreb indices of a
graph $H$ as edge counterpart of the first and second Zagreb
indices, respectively. These numbers are defined as $EM_1(H)$ $=$
$\sum_{e \sim f}[d_G(e)+d_G(f)]=\sum_{e\in E(G)}d_G(e)^2$ and
$EM_2(G)$ $=$ $\sum_{e \sim f}d_G(e)d_G(f),$ where for $e=uv$,
$d_H(e) = d_H(u) + d_H(v) - 2$ denotes the degree of the edge $e$,
and $e \sim f$ means that the edges $e$ and $f$ are incident.

In the notify paper of Gutman and Trinajsti\'c \cite{6}, the sum of
cubes of degrees of vertices in the molecular graph of $M$ was also
investigated, but this invariant studied in details by Furtula and
Gutman \cite{4}. They named this quantity the forgotten index and
used the notation $F(H)$ to denote the forgotten index of a
molecular graph $H$. In an exact phrase  $F(H)$ $=$ $\sum_{v \in
V(H)}d_H(v)^3$ $=$ $\sum_{e = uv \in E(H)}[d_H(u)^2 + d_H(v)^2]$. A
general form of the first Zagreb index can be defined as
$M_1^\alpha(H)$ $=$ $\sum_{u \in V(H)}d_H(u)^\alpha$, where $\alpha$
is an arbitrary real number except from $0$ and $1$. Zhang and
Zhang \cite{9} obtained some extremal values of this invariant in
the class of all unicyclic graphs of a given order. Notice that the
first Zagreb index   and the forgotten topological index are just
the case of $\alpha = 2,3$ in the first general Zagreb index,
respectively. The interested readers can consult a recent paper of
Gutman et al. \cite{68} for an interesting survey of these modified
Zagreb indices and their main mathematical properties.

Suppose $H$ is a simple undirected graph, $e=xy, f \in E(H)$ and
$v\in V(H)$, where $v\neq x,y$. The common vertex of $e$ and $f$ is
denoted by $e \cap f$ and $e \cap f = \emptyset$ means that $e$ and
$f$ are not incident.  We say that $v$ is incident to $e$ if
$\{vx,vy\}\cap E(H)\neq \emptyset$.  Define:
 \begin{eqnarray*}
 \Lambda(H)&=&\Big\{\{v,e\} \mid v\in V(H),\,e\in E(H) \mbox{ and $v$ is incident to $e$}\Big\},\\
 \alpha_\lambda(H) &=& \sum\limits_{uv \in E(H)}\,d_H(u)\,d_H(v)\,\Big[d_H(u)^\lambda+d_H(v)^\lambda\Big],\\
 \beta(H) &=&  \sum\limits_{e \sim f}\,d_H(e \cap f)\Big(d_H(e)+d_H(f)\Big),\\
 \gamma(H)&=&\displaystyle\sum\limits_{\{v,xy\}\in \Lambda(H)}\,d_H(v)\Big[d_H(x)+d_H(y)\Big],\\
 M_2^\lambda(G)&=& \sum\limits_{uv\in E(H)}\,\Big(d_H(u)d_H(v)\Big)^\lambda,
 \end{eqnarray*}
 where $\lambda$ is an arbitrary real number. We use  $\alpha(H)$,  instead  $\alpha_1(H)$ and the second Zagreb index is just the case of $\lambda = 1$ in $M_2^\lambda(G)$.

Suppose $H$ has exactly $n$ vertices. A $k$-matching is a matching
with $k$ edges and $p(H; k)$ denotes the number of $k$-matchings in
$H$, $k \ne 0$, and by definition $p(H; 0) = 1$. The weight of a
given $k-$matching $M$ is defined as $w_1^{n-2k}w_2^k$ in which
$w_1$ and $w_2$ are weights assigned to each vertex and edge,
respectively. The polynomial
$$p(H)=\sum_{k=0}^{\lfloor\frac{n}{2}\rfloor}p(H;k)w_1^{n-2k}w_2^k$$ is called the
matching polynomial of $H$  and we use the term simple matching
polynomial when $w_1=w_2=w$, see \cite{25} for details. In the
mentioned paper, Farrell presented an algorithm for computing
matching polynomial by which he computed this polynomial for the
path $P_n$, cycle $C_n$, complete graph $K_n$ and complete bipartite
graph $K_{m,n}$. Farrell and Guo \cite{2} continued the study of
matching polynomial and obtained the first four coefficients of
$p(H)$ in terms of some graph invariants related to $H$.

Behmaram \cite{1} obtained a formula for the number of $4-$matchings
in triangular-free graphs in terms of the number of vertices, edges,
degrees and $4-$cycles. He applied this formula to prove that the
Petersen graph is uniquely determined by its matching polynomial.
Fischermann et al. \cite{3} proved sharp upper and lower bounds on
the number of matchings in a tree in terms of the number of
independent and 2-independent sets. Let $T$ be an $n-$vertex tree
and $S(T)$ be the tree obtained from $T$ by replacing each edge of
$T$ by a path of length two. Yan and Yeh \cite{8} proved that the
Wiener index of $T$ can be explained as the number of matchings with
$n - 2$ edges in $S(T)$.

Note that $p(H;1) = |E(H)|$ and if $1\leq k\leq
\lfloor\frac{n}{2}\rfloor$, then
\begin{equation}\label{eq1}
p(H;k)=\frac{1}{k}\sum_{uv\in E(H)}p(H-\{u,v\};k-1).
\end{equation}
By Equation \ref{eq1}, we can obtain formulas for $p(P_n;2), p(S_n;2)$ and $p(C_n;2)$ as follows:

 \begin{eqnarray}
 p(P_n;2)&=&\frac{1}{2}\sum_{uv\in E(P_n)}p(P_n-\{u,v\};1)=\frac{1}{2}\Big[\sum_{i=1}^{n-3}(n-4)+2(n-3)\Big]\nonumber\\
 &=&\frac{1}{2}(n-3)(n-2),~~~\label{aaae1}\\
  p(S_n;2)&=&\frac{1}{2}\sum_{uv\in E(S_n)}p(S_n-\{u,v\};1)=\frac{1}{2}\sum_{uv\in E(S_n)}0=0,\nonumber\\
   p(C_n;2)&=&\frac{1}{2}\sum_{uv\in E(C_n)}p(C_n-\{u,v\};1)=\frac{1}{2}\sum_{uv\in E(C_n)}p(P_{n-2};1)=\frac{1}{2}n(n-3).\nonumber
 \end{eqnarray}
It is merit to state here that this quantities were calculated by Farrell \cite{25}. The aim of this paper is to continue the problem of computing
matching coefficients of a general graph in terms of some graph invariants. Our notations here are standard and can be taken from any standard book in graph theory.

\section{Main Result}
\label{Sec:2}
The aim of this section is to present a formula for the number of $k-$matching, $k \leq 5$, in terms of some degree-based topological indices.

 \begin{lemma}\label{1ml1}
Let $G$ be a graph with $n$ vertices and $m$ edges. Define $\mu_k(G)$ $=$ $\sum_{uv\in V(G)}m(G$ $-\{ u, v\})^k$, for $k=1,2,3,4$. Then
\begin{enumerate}
\item $\mu_1(G)=m^2+m-M_1(G)$.
\item $\mu_2(G)=m^3+F(G)-2mM_1(G)+2M_2(G)+2m^2-2M_1(G)+m$.
\item $\mu_3(G)=m^4-3m^2M_1(G)+3mF(G)+6mM_2(G)-M_1^4(G)-3\alpha(G)+3m^3
-6mM_1(G)+3F(G)+6M_2(G)+3m^2-3M_1(G)+m$.
\item $\mu_4(G)=M_1^5(G)+4\alpha_2(G)-4mM_1^4(G)+6M_2^2(G)-12m\alpha(G)+6m^2F(G)
+12m^2M_2(G)$ $-4m^3M_1(G)-4M_1^4(G)-12\alpha(G)+12mF(G)
+24mM_2(G)-12m^2M_1(G)+6F(G)+12M_2(G)-12mM_1(G)-4M_1(G)
+m^5+4m^4+6m^3+4m^2+m$.
\end{enumerate}
 \end{lemma}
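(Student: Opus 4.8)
The plan is to reduce the whole lemma to one elementary observation. If $e=uv$ is an edge of $G$, then deleting the two endpoints $u,v$ removes exactly $d_G(u)+d_G(v)-1$ edges, the $-1$ compensating for the edge $uv$ being counted in both $d_G(u)$ and $d_G(v)$ (the graph being simple, no other edge is double-counted). Hence
$$m\big(G-\{u,v\}\big)=m-d_G(u)-d_G(v)+1 .$$
Writing $x_e=d_G(u)+d_G(v)$ for $e=uv$, this says $m(G-\{u,v\})=m+1-x_e$, so by the binomial theorem
$$\mu_k(G)=\sum_{uv\in E(G)}(m+1-x_e)^k=\sum_{j=0}^{k}\binom{k}{j}(m+1)^{k-j}(-1)^j\,S_j,\qquad S_j:=\sum_{uv\in E(G)}\big(d_G(u)+d_G(v)\big)^j .$$
Thus the lemma is entirely reduced to expressing $S_0,S_1,S_2,S_3,S_4$ through the degree-based invariants occurring in the statement.

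For the numbers $S_j$ I would expand $\big(d_G(u)+d_G(v)\big)^j$ by the binomial theorem inside the edge sum and use the two bookkeeping facts $\sum_{uv\in E(G)}\big(d_G(u)^r+d_G(v)^r\big)=\sum_{w\in V(G)}d_G(w)^{r+1}=M_1^{\,r+1}(G)$ together with the definitions of $M_2$, $\alpha$, $\alpha_2$ and $M_2^2$. This yields $S_0=m$, $S_1=M_1(G)$, $S_2=F(G)+2M_2(G)$, $S_3=M_1^4(G)+3\alpha(G)$ and $S_4=M_1^5(G)+4\alpha_2(G)+6M_2^2(G)$; the only things to recognise here are $\sum_{uv\in E(G)}d_G(u)d_G(v)\big(d_G(u)+d_G(v)\big)=\alpha(G)$ and $\sum_{uv\in E(G)}d_G(u)d_G(v)\big(d_G(u)^2+d_G(v)^2\big)=\alpha_2(G)$, which are immediate from the definitions in Section~\ref{Sec:1}.

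It then remains to substitute these five values into the displayed identity for $k=1,2,3,4$, expand the powers $(m+1)^{k-j}$, and collect like terms; each of the four claimed formulas drops out directly. I would set up the last step by first recording $(m+1),(m+1)^2,(m+1)^3,(m+1)^4$ explicitly and tabulating the contribution $\binom{k}{j}(m+1)^{k-j}(-1)^jS_j$ of each $j$ before summing.

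I expect no conceptual obstacle: the only delicate point is the sheer bookkeeping in the case $k=4$, where five summands each fan out into several monomials in $m$ and in the invariants, so a sign or coefficient slip is the main risk; organising the computation by the table just described is what keeps it under control. The cases $k=1,2,3$ are short enough to verify by hand with essentially no room for error.
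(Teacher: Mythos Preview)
Your approach is correct and is exactly what the paper does: it records that $\mu_k(G)=\sum_{uv\in E(G)}\big[m-(d_G(u)+d_G(v)-1)\big]^k$ and then simply says ``a simple calculation gives the result.'' Your proposal merely spells out that calculation (the binomial expansion and the identification of the $S_j$ with $m,M_1,F+2M_2,M_1^4+3\alpha,M_1^5+4\alpha_2+6M_2^2$), so there is no difference in method.
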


\begin{proof}
By definition, $\mu_k(G)$ $=$ $\sum\limits_{uv\in
E(G)}\Big[m-\big(d_G(u)+d_G(v)-1\big)\Big]^k$ and  a simple
calculation gives the result.
\end{proof}

 Let $d_G(u)$ be the degree of the vertex $u$ in $G$. Also let $m_G(u)$ be the average degree of the adjacent vertices of vertex
 $u$ in $G$. Then
   $$m_G(u)=\frac{\sum\limits_{v:vu\in E(G)}\,d_G(v)}{d_G(u)},~\mbox{ that is,}~\sum\limits_{v:vu\in E(G)}\,d_G(v)=d_G(u)\,m_G(u).$$
 One can easily see that
 \begin{equation}
 \sum\limits_{u\in V(G)}\,d_G(u)\,m_G(u)=\sum\limits_{u\in V(G)}\,\sum\limits_{v:vu\in E(G)}\,d_G(v)=\sum\limits_{u\in V(G)}\,d_G(u)^2=M_1(G)\nonumber
 \end{equation}
 and
 \begin{eqnarray}
 \sum_{uv\in E(G)}\,\Big[d_G(u)\,m_G(u)+d_G(v)\,m_G(v)\Big]&=&\sum_{u\in V(G)}\,d_G(u)^2\,m_G(u)\nonumber\\
                                                      &=&\sum_{u\in V(G)}\,d_G(u)\,\sum_{v:vu\in E(G)}\,d_G(v)\nonumber\\
                                                      &=&2\,\sum_{uv\in E(G)}\,d_G(u)\,d_G(v)=2\,M_2(G).\label{1kan2}
 \end{eqnarray}

\begin{lemma}\label{l1}
Let $G$ be a graph with $n$ vertices, $m$ edges and $g(G) \geq 4$. For $k=2,3,4$, we define $ \xi_{k-1}(G)=\sum_{uv\in V(G)}M_1^k(G-\{ u, v\})$. Then the following hold:
\begin{enumerate}
\item $\xi_1(G)=(m+3)\,M_1(G)-F(G)-4M_2(G)-2m $.
\item $\xi_2(G)=(m+3)\,F(G)-M_1^4(G)-3\alpha(G)+6M_2(G)-4M_1(G)+2m$.
\item $\xi_3(G)=(m+4)\,M_1^4(G)-M_1^5(G)+5M_1(G)-2m-4\alpha_2(G)+6\alpha(G)-6F(G)-8M_2(G)$.
\end{enumerate}
\end{lemma}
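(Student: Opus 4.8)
The plan is to mimic the mechanism behind Lemma~\ref{1ml1}: rewrite each sum $\xi_{k-1}(G)=\sum_{uv\in E(G)}M_1^k(G-\{u,v\})$ as an edge-sum of a polynomial in the degrees of the endpoints $u,v$ and of their neighbours, then collect terms into the named degree-based invariants. Fix an edge $uv\in E(G)$. Because $g(G)\ge 4$, the edge $uv$ lies in no triangle, so $u$ and $v$ have no common neighbour; hence deleting $u$ and $v$ from $G$ decreases the degree of a vertex $w\notin\{u,v\}$ by exactly $|N_G(w)\cap\{u,v\}|\in\{0,1,2\}$, and this is $2$ only when $w$ is adjacent to both, which (again by girth $\ge 4$) cannot happen unless $w$ is… it can't happen at all, since a common neighbour of $u,v$ would create a triangle with the edge $uv$. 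So in fact every vertex $w\ne u,v$ loses either $0$ or $1$ from its degree, and it loses $1$ for each of $u,v$ it is adjacent to — but it is adjacent to at most one of them. This is the key simplification the girth hypothesis buys us:
$$M_1^k(G-\{u,v\})=\sum_{w\in V(G)\setminus\{u,v\}}\bigl(d_G(w)-e_{uv}(w)\bigr)^k,$$
where $e_{uv}(w)=|N_G(w)\cap\{u,v\}|\in\{0,1\}$ for $w\notin\{u,v\}$, and $\sum_{w}e_{uv}(w)=(d_G(u)-1)+(d_G(v)-1)$ counts the neighbours of $u$ other than $v$ plus the neighbours of $v$ other than $u$, these sets being disjoint.

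Expanding $(d_G(w)-e_{uv}(w))^k$ by the binomial theorem and using $e_{uv}(w)^j=e_{uv}(w)$ for all $j\ge 1$, one gets
$$M_1^k(G-\{u,v\})=M_1^k(G)-d_G(u)^k-d_G(v)^k-\Bigl[\textstyle\sum_{j=1}^{k}\binom{k}{j}(-1)^{j-1}\sum_{w\sim u,\,w\ne v}d_G(w)^{\,k-j}+(\text{same with }u\leftrightarrow v)\Bigr].$$
Now sum over $uv\in E(G)$. The term $\sum_{uv}\bigl(M_1^k(G)-d_G(u)^k-d_G(v)^k\bigr)$ gives $mM_1^k(G)-M_1^{k+1}(G)$ since $\sum_{uv\in E(G)}(d_G(u)^k+d_G(v)^k)=\sum_{u}d_G(u)^{k+1}=M_1^{k+1}(G)$. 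The remaining double sums are the ones that must be matched to $F$, $M_2$, $\alpha$, $\alpha_2$, etc.; the controlled pieces are $\sum_{uv\in E(G)}\sum_{w\sim u,\,w\ne v}d_G(w)^{\,r}$. Splitting off the $w$-over-all-neighbours sum and subtracting the $w=v$ correction, $\sum_{uv\in E(G)}\sum_{w\sim u}d_G(w)^{r}=\sum_{u}d_G(u)\sum_{w\sim u}d_G(w)^{r}$, and for the small values $r\in\{0,1,2,3\}$ needed here ($k\le 4$) each such sum is a known invariant: $r=0$ gives $\sum_u d_G(u)^2=M_1$; $r=1$ gives $\sum_u d_G(u)\sum_{w\sim u}d_G(w)=2M_2$ by \eqref{1kan2}; $r=2$ gives $\sum_u d_G(u)\sum_{w\sim u}d_G(w)^2=\sum_{uv\in E}(d(u)d(v)^2+d(v)d(u)^2)=\sum_{uv\in E}d(u)d(v)(d(u)+d(v))=\alpha(G)$; and $r=3$ gives $\sum_{uv\in E}d(u)d(v)(d(u)^2+d(v)^2)=\alpha_2(G)$. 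The $w=v$ corrections contribute $\sum_{uv\in E(G)}d_G(v)^r$, which is $M_1^{r+1}(G)$-type or $F(G)$, $M_1(G)$, $2m$ depending on $r$.

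The main obstacle is purely organisational: for $k=4$ (item 3) the binomial expansion produces four nested-sum contributions ($r=3,2,1,0$) from each endpoint, each of which splits into an $\alpha$-type (or $M_2$-type, or $M_1$-type) main term and a lower-order correction, and these must be combined with the $mM_1^4(G)-M_1^5(G)$ head term and with the signs from $(-1)^{j-1}$ without slips. Concretely I would (i) verify item~1 ($k=2$) by hand as the base case, checking it reproduces $(m+3)M_1(G)-F(G)-4M_2(G)-2m$; (ii) do $k=3$, where the new feature is the $r=2$ sum producing $\alpha(G)$; (iii) do $k=4$, where $r=3$ produces $\alpha_2(G)$ and $r=2$ again produces $\alpha(G)$, being careful that the coefficient of $\alpha(G)$ in item~3 is $+6$ (it picks up contributions both from the $\binom{4}{2}$ term directly and, after correction, elsewhere). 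Throughout, the only place the hypothesis $g(G)\ge 4$ is used is the assertion $e_{uv}(w)\in\{0,1\}$ and the disjointness of $N_G(u)\setminus\{v\}$ and $N_G(v)\setminus\{u\}$; without it one would get extra terms counting common neighbours (i.e.\ triangles through $uv$), which is exactly why the general-graph formulas in the later theorems need correction terms like the number of triangles.
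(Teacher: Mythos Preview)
Your proposal is correct and is essentially the paper's own proof: both start from $M_1^k(G-\{u,v\})=M_1^k(G)-d_G(u)^k-d_G(v)^k-\sum_{x\sim u,\,x\ne v}[\,\cdots\,]-\sum_{y\sim v,\,y\ne u}[\,\cdots\,]$ (valid precisely because $g(G)\ge 4$ forces $e_{uv}(w)\in\{0,1\}$), sum over edges, and reduce the neighbour sums to $M_1,M_2,F,\alpha,\alpha_2$. The only cosmetic difference is that the paper evaluates $\sum_{uv\in E}\sum_{x\sim u,\,x\ne v}g(d_G(x))$ by reindexing the path $x\!-\!u\!-\!v$ to get $\sum_{uv\in E}g(d_G(u))(d_G(v)-1)$, whereas you add back the $x=v$ term and use $\sum_{uv\in E}[\sum_{x\sim u}+\sum_{x\sim v}]=\sum_u d_G(u)\sum_{x\sim u}$; these are equivalent bookkeeping moves.
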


\begin{proof} By definition of graph invariants with (\ref{1kan2}), one can see that

\begin{small}
 \begin{flalign*}
&(1)\hspace*{3mm}\xi_1(G)\\
&=\sum_{uv\in E(G)}\Bigg[ M_1(G)-\Bigg(d_G(u)^2+d_G(v)^2+\sum_{x:xu\in E(G)\atop x\neq v}\Big(2d_G(x)-1\Big)+\sum_{y:yv\in E(G)\atop y\neq u}\Big(2d_G(y)-1\big)\Bigg)\Bigg]\\
&=mM_1(G)-F(G)-\sum_{uv\in E(G)}\Bigg[\sum_{x:xu\in E(G)\atop x\neq v}\Big(2d_G(x)-1\Big)+\sum_{y:yv\in E(G)\atop y\neq u}\Big(2d_G(y)-1\Big)\Bigg]\\
&=mM_1(G)-F(G)-\sum_{uv\in
E(G)}\Bigg[2\Big(d_G(u)\,m_G(u)+d_G(v)\,m_G(v)\Big)-3\Big(d_G(u)+d_G(v)\Big)+2\Bigg]\\[2mm]
&=(m+3)\,M_1(G)-F(G)-4M_2(G)-2m.
 \end{flalign*}
 \end{small}
 Since $g(G)\geq 4$, by definition of graph invariants, one can see that
\begin{eqnarray*}
(2)~\xi_2(G)&=&\sum_{uv\in E(G)}\Bigg[ F(G)-\Bigg(d_G(u)^3+d_G(v)^3+\sum_{x:xu\in E(G)\atop x\neq v}\Big(3d_G(x)^2-3d_G(x)+1\Big)\\
&&\hspace*{5.6cm}~+\sum_{y:yv\in E(G)\atop y\neq
u}\Big(3d_G(y)^2-3d_G(y)+1\Big)\Bigg)\Bigg]\\
&=&mF(G)-M_1^4(G)-\sum_{uv\in E(G)}\Bigg[\sum_{x:xu\in E(G)\atop x\neq v}\Big(3d_G(x)^2-3d_G(x)+1\Big)\\
&&\hspace*{5.6cm}+\sum_{y:yv\in E(G)\atop y\neq
u}\Big(3d_G(y)^2-3d_G(y)+1\Big) \Bigg]\\
 &=&mF(G)-M_1^4(G)-\sum_{uv\in E(G)}\Bigg[\Big(3d_G(u)^2-3d_G(u)+1\Big)\Big(d_G(v)-1\Big)\\
&&\hspace*{5.6cm}+\Big(3d_G(v)^2-3d_G(v)+1\Big)\Big(d_G(u)-1\Big)\Bigg]\\
 &=&mF(G)-M_1^4(G)-\sum_{uv\in E(G)}\Bigg(3\Big(d_G(u)^2d_G(v)+d_G(u)d_G(v)^2\Big)\\
&&\hspace*{1.6cm}-3\Big(d_G(u)^2+d_G(v)^2\Big)-6d_G(u)d_G(v)+4\Big(d_G(u)+d_G(v)\Big)-2\Bigg)\\
 &=&(m+3)\,F(G)-M_1^4(G)-3\alpha(G)+6M_2(G)-4M_1(G)+2m,
 \end{eqnarray*}
 
 \begin{small}
 \begin{eqnarray*}
&(3)&~\xi_3(G)\\
&=&mM_1^4(G)-\sum_{v\in V(G)}d_G(v)^5-\sum_{uv\in E(G)}\Bigg[\sum_{x:xu\in E(G)\atop x\neq v}\Big(4d_G(x)^3-6d_G(x)^2+4d_G(x)-1\Big)\\
&&\hspace*{3cm}+\sum_{y:yv\in E(G)\atop y\neq u}\Big(4d_G(y)^3-6d_G(y)^2+4d_G(y)-1\Big)\Bigg]\\
&=&mM_1^4(G)-M_1^5(G)-\sum_{uv\in E(G)}\Bigg[\Big(4d_G(v)^3-6d_G(v)^2+4d_G(v)-1\Big)\\
&&\hspace*{2cm}\times\Big(d_G(u)-1\Big)+\Big(4d_G(u)^3-6d_G(u)^2+4d_G(u)-1\Big)\Big(d_G(v)-1\Big)\Bigg]\\
&=&(m+4)\,M_1^4(G)-M_1^5(G)+5M_1(G)-2m-4\alpha_2(G)+6\alpha(G)-6F(G)-8M_2(G).
 \end{eqnarray*}
  \end{small}
This completes the proof of the lemma.
\end{proof}

\begin{lemma}\label{cl1}
Let $G$ be a graph with $n$ vertices, $m$ edges and $g(G)\geq4$. Set
$\varrho_1(G)$ $=$ $\sum_{uv\in V(G)}m(G-\{ u, v\})M_1(G-\{ u,
v\})$. Then,
\begin{eqnarray*}
\varrho_1(G)&=&(m^2+4m+5)\,M_1(G)-(m+2)\,F(G)-(4m+6)\,M_2(G)+M_1^4(G) - M_1(G)^2\\
&&+\alpha(G)-2m^2-2m+2\gamma(G).
\end{eqnarray*}
\end{lemma}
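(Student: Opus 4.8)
The plan is to evaluate the summand edgewise, reusing the degree bookkeeping already done in the proofs of Lemmas \ref{1ml1} and \ref{l1}, and then to sum over all edges. Fix an edge $uv\in E(G)$, put $G'=G-\{u,v\}$, and abbreviate $S=d_G(u)+d_G(v)$, $Q=d_G(u)^2+d_G(v)^2$ and $R=d_G(u)\,m_G(u)+d_G(v)\,m_G(v)$. From the proof of Lemma \ref{1ml1}, $m(G')=m-(d_G(u)+d_G(v)-1)=(m+1)-S$, and from the computation in the proof of Lemma \ref{l1}(1) (for a single edge $\sum_{x:xu\in E(G),\,x\neq v}(2d_G(x)-1)+\sum_{y:yv\in E(G),\,y\neq u}(2d_G(y)-1)=2R-3S+2$), one gets $M_1(G')=\bigl(M_1(G)-2\bigr)-Q-2R+3S$. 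The latter is where $g(G)\geq4$ is used: for $uv\in E(G)$ no vertex of $G$ can be adjacent to both $u$ and $v$, so deleting $u$ and $v$ lowers the degree of every surviving vertex by $0$ or $1$. Multiplying the two expressions,
\begin{align*}
m(G')\,M_1(G')&=(m+1)(M_1(G)-2)-(m+1)Q-2(m+1)R+3(m+1)S\\
&\qquad-S(M_1(G)-2)+SQ+2SR-3S^2 .
\end{align*}

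Next I would sum this identity over the $m$ edges of $G$. The straightforward edge-sums are $\sum_{uv\in E(G)}S=M_1(G)$, $\sum_{uv\in E(G)}Q=F(G)$, $\sum_{uv\in E(G)}R=2M_2(G)$ (this is (\ref{1kan2})), $\sum_{uv\in E(G)}S^2=F(G)+2M_2(G)$ and $\sum_{uv\in E(G)}SQ=M_1^4(G)+\alpha(G)$; each follows from the elementary identity $\sum_{uv\in E(G)}[f(u)+f(v)]=\sum_{w\in V(G)}d_G(w)f(w)$ applied with $f$ a suitable power of the degree, together with the definitions of $F$, $M_1^4$, $\alpha$ and $M_2$. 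The only genuinely new ingredient, and the step I expect to be the main obstacle, is
\begin{equation*}
\sum_{uv\in E(G)}SR=F(G)+2M_2(G)+\gamma(G),
\end{equation*}
since this is where $\gamma(G)$ and the girth hypothesis really intervene.

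To establish that identity I would split $SR=\bigl(d_G(u)^2m_G(u)+d_G(v)^2m_G(v)\bigr)+d_G(u)d_G(v)\bigl(m_G(u)+m_G(v)\bigr)$. Summing the first bracket over edges gives $\sum_{w\in V(G)}d_G(w)^3m_G(w)$, and rewriting $d_G(w)^3m_G(w)=d_G(w)^2\sum_{x:xw\in E(G)}d_G(x)$ and reindexing over edges (just as in the derivation of (\ref{1kan2})) turns this into $\alpha(G)$. For the second bracket, passing to ordered pairs and using $d_G(u)m_G(u)=\sum_{x:xu\in E(G)}d_G(x)$ gives $\sum_{(u,v):\,uv\in E(G)}\sum_{x:xu\in E(G)}d_G(v)d_G(x)$; the terms with $x=v$ contribute $\sum_{(u,v):\,uv\in E(G)}d_G(v)^2=F(G)$, and the terms with $x\neq v$ range over the ordered length-two paths of $G$, contributing $\sum d_G(v)d_G(x)$ over all ordered triples $v,u,x$ with $vu,ux\in E(G)$ and $v\neq x$. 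Because $g(G)\geq4$ there are no triangles, so the map sending $\{v,xy\}\in\Lambda(G)$ (with $v$ adjacent to the unique endpoint $x$ of $xy$ it is adjacent to) to the ordered triple $v,x,y$ is a bijection onto these ordered paths; consequently $\gamma(G)=\sum d_G(a)\bigl(d_G(b)+d_G(c)\bigr)$ summed over ordered triples $a,b,c$ with $ab,bc\in E(G)$ and $a\neq c$. Subtracting $\sum d_G(a)d_G(b)$ over the same triples — which equals $\sum_{w\in V(G)}(d_G(w)-1)d_G(w)^2m_G(w)=\alpha(G)-2M_2(G)$, using $\sum_w d_G(w)^3m_G(w)=\alpha(G)$ and $\sum_w d_G(w)^2m_G(w)=2M_2(G)$ — isolates $\sum d_G(a)d_G(c)=\gamma(G)-\alpha(G)+2M_2(G)$. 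Hence the second bracket sums to $F(G)+\gamma(G)-\alpha(G)+2M_2(G)$, and restoring the $\alpha(G)$ from the first bracket yields the displayed identity.

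Finally I would substitute all of these edge-sums into the summed product and collect coefficients: the coefficient of $M_1(G)$ is $m(m+1)+3(m+1)+2=m^2+4m+5$, that of $F(G)$ is $-(m+1)+2-3=-(m+2)$, that of $M_2(G)$ is $-4(m+1)+4-6=-(4m+6)$, the purely numerical term (in $m$) is $-2m(m+1)=-2m^2-2m$, and the remaining contributions amount to exactly $M_1^4(G)-M_1(G)^2+\alpha(G)+2\gamma(G)$; assembling everything gives the asserted formula for $\varrho_1(G)$. This last step is routine bookkeeping; the delicate point throughout is the evaluation of $\sum_{uv\in E(G)}SR$, the only place where a path-type invariant is unavoidable and where $g(G)\geq4$ cannot be weakened. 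A quick check on small paths (for instance $\varrho_1(P_4)=4$) confirms the numerical constants.
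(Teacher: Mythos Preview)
Your proof is correct and follows the same strategy as the paper: write $m(G')$ and $M_1(G')$ edgewise, multiply, and sum the resulting terms using the standard degree identities. The only difference is cosmetic: the paper keeps the raw neighbor sums $\sum_{x:xu\in E(G),\,x\neq v}d_G(x)+\sum_{y:yv\in E(G),\,y\neq u}d_G(y)$ intact and observes in one line that
\[
\sum_{uv\in E(G)}\big(d_G(u)+d_G(v)\big)\Bigg[\sum_{x:xu\in E(G),\,x\neq v}d_G(x)+\sum_{y:yv\in E(G),\,y\neq u}d_G(y)\Bigg]=\gamma(G)
\]
directly from the definition of $\Lambda(G)$ (each pair $\{w,uv\}\in\Lambda(G)$ is hit exactly once, using $g(G)\geq 4$ for disjointness of $N(u)\setminus\{v\}$ and $N(v)\setminus\{u\}$). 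You instead package those neighbor sums into $R$ and then compute $\sum_{uv}SR$ by a longer route in which $\alpha(G)$ appears twice and cancels; this is correct but is a detour you could avoid by recognizing $\gamma(G)$ immediately as above. The final bookkeeping and the numerical check on $P_4$ are fine.
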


\begin{proof} Since $g(G)\geq4$, by definition, we obtain
 \begin{eqnarray}
 \varrho_1(G)&=&\sum_{uv\in E(G)}\Big[m-\Big(d_G(u)+d_G(v)-1\Big)\Big]\cdot\Big[ M_1(G)-\Big(d_G(u)^2+d_G(v)^2\nonumber\\
 &&\hspace*{4cm}+\sum_{x:xu\in E(G)\atop x\neq v}(2d_G(x)-1)+\sum_{y:yv\in E(G)\atop y\neq u}(2d_G(y)-1)\Big)\Big]\nonumber\\
 &=&(m+1)\,m\,M_1(G)-(m+1)\,F(G)-(m+1)\Big[4M_2(G)-3M_1(G)+2m\Big]\nonumber\\
 &&-M_1(G)^2+\sum\limits_{uv\in E(G)}\,\Big(d_G(u)+d_G(v)\Big)\,\Big(d_G(u)^2+d_G(v)^2\Big)\nonumber\\
 &&+\sum\limits_{uv\in E(G)}\,\Big(d_G(u)+d_G(v)\Big)\Big[\sum_{x:xu\in E(G)\atop x\neq v}(2d_G(x)-1)+\sum_{y:yv\in E(G)\atop y\neq u}(2d_G(y)-1)\Big].~~\label{1kin1}
 \end{eqnarray}
 Since
 $$\sum\limits_{uv\in E(G)}\,\Big(d_G(u)+d_G(v)\Big)\,\Big(d_G(u)^2+d_G(v)^2\Big)=M^4_1(G)+\alpha(G),$$
 and
 \begin{eqnarray*}
 &&\sum\limits_{uv\in E(G)}\,\Big(d_G(u)+d_G(v)\Big)\Big[\sum_{x:xu\in E(G)\atop x\neq v}(2d_G(x)-1)+\sum_{y:yv\in E(G)\atop y\neq u}(2d_G(y)-1)\Big]\\
 &=&2\,\sum\limits_{uv\in E(G)}\,\Big(d_G(u)+d_G(v)\Big)\Big[\sum_{x:xu\in E(G)\atop x\neq v}\,d_G(x)+\sum_{y:yv\in E(G)\atop y\neq u}\,d_G(y)\Big]\\
 &&-\sum\limits_{uv\in E(G)}\,\Big(d_G(u)+d_G(v)\Big)^2+2\sum\limits_{uv\in E(G)}\,\Big(d_G(u)+d_G(v)\Big)\\
 &=&2\gamma(G)-F(G)-2\,M_2(G)+2\,M_1(G),
 \end{eqnarray*}
 from (\ref{1kin1}), we get the required result.
 \end{proof}

\begin{lemma}\label{al1}
{\rm (See \cite{0} )} Let $G$ be a graph. Then
$\beta(G)-\alpha(G)=M_1^4(G)-3F(G)+2M_1(G)-2M_2(G)$.
\end{lemma}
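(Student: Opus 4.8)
The plan is to prove the identity $\beta(G)-\alpha(G)=M_1^4(G)-3F(G)+2M_1(G)-2M_2(G)$ by expanding both $\beta(G)$ and $\alpha(G)$ as sums over ordered pairs of incident edges and comparing. Recall $\alpha(G)=\sum_{uv\in E(G)}d_G(u)d_G(v)\big(d_G(u)+d_G(v)\big)$ and $\beta(G)=\sum_{e\sim f}d_G(e\cap f)\big(d_G(e)+d_G(f)\big)$, where $d_G(e)=d_G(u)+d_G(v)-2$ for $e=uv$. The key observation is that both quantities can be rewritten as sums indexed by a vertex together with two distinct edges incident to it (equivalently, by paths of length two, or by "cherries" centered at a vertex), so the natural strategy is to fix a central vertex $w$ and sum over unordered pairs of neighbours.

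First I would rewrite $\beta(G)$. A pair of incident edges $e\sim f$ shares a unique vertex $w=e\cap f$; write $e=wx$, $f=wy$ with $x\ne y$. Then $d_G(e)+d_G(f)=\big(d_G(w)+d_G(x)-2\big)+\big(d_G(w)+d_G(y)-2\big)=2d_G(w)+d_G(x)+d_G(y)-4$, so
\begin{equation*}
\beta(G)=\sum_{w\in V(G)}\ \sum_{\{x,y\}\subseteq N_G(w),\,x\ne y} d_G(w)\Big(2d_G(w)+d_G(x)+d_G(y)-4\Big).
\end{equation*}
Next I would expand $\alpha(G)$ in the same frame. Since $\alpha(G)=\sum_{uv\in E(G)}\big(d_G(u)^2 d_G(v)+d_G(u)d_G(v)^2\big)=\sum_{w\in V(G)}d_G(w)^2\sum_{x\in N_G(w)}d_G(x)=\sum_{w\in V(G)}d_G(w)^2\,d_G(w)m_G(w)$, and using the cherry decomposition $\sum_{\{x,y\}\subseteq N_G(w)}\big(d_G(x)+d_G(y)\big)=\big(d_G(w)-1\big)\sum_{x\in N_G(w)}d_G(x)$ together with $\binom{d_G(w)}{2}$ as the number of cherries at $w$, I can bring $\alpha(G)$ into a comparable vertex-indexed double sum. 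Then $\beta(G)-\alpha(G)$ collapses, after carrying out the inner sums over $\{x,y\}$, into a single sum over $w\in V(G)$ of a polynomial in $d_G(w)$ and $\sum_{x\in N_G(w)}d_G(x)=d_G(w)m_G(w)$; recognizing $\sum_w d_G(w)^k$ as $M_1(G)$, $F(G)$, $M_1^4(G)$ and $\sum_w d_G(w)m_G(w)=M_1(G)$, $\sum_w d_G(w)^2 m_G(w)=2M_2(G)$ (both established in the displayed identities preceding Lemma~\ref{l1}) finishes the computation.

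Alternatively, and perhaps more transparently, one can work directly with the edge-pair formulation: for each edge $uv$, the edges incident to it split into those through $u$ and those through $v$, so $\beta(G)=\sum_{uv\in E(G)}\Big[\sum_{x\in N_G(u)\setminus\{v\}}d_G(u)\big(d_G(e_{uv})+d_G(ux)\big)+\sum_{y\in N_G(v)\setminus\{u\}}d_G(v)\big(d_G(e_{uv})+d_G(vy)\big)\Big]$, expand the edge-degrees, and use $\sum_{x\in N_G(u)\setminus\{v\}}d_G(x)=d_G(u)m_G(u)-d_G(v)$ together with the two auxiliary identities before Lemma~\ref{l1}; this is exactly the style of manipulation already performed in the proofs of Lemmas~\ref{l1} and~\ref{cl1}, so it fits the paper's idiom.

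The main obstacle will be purely bookkeeping: keeping the lower-order terms (the $-2$ and $-4$ shifts coming from $d_G(e)=d_G(u)+d_G(v)-2$, and the $-1$ corrections from excluding the shared vertex in each inner neighbour-sum) correctly tracked so that the constant-coefficient contributions assemble into exactly $+2M_1(G)-2M_2(G)$ rather than something off by a multiple of $m$ or $M_1(G)$. There is no conceptual difficulty — the identity is an elementary consequence of double counting — but it is easy to drop a cross term, so I would organize the computation by grouping terms according to their total degree in the vertex degrees ($3$rd degree, $2$nd degree, etc.) and verify each group separately. Since the statement is quoted as a known result from \cite{0}, I would also simply cite that reference as the primary justification and present the above expansion as a self-contained verification.
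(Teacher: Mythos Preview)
Your proposal is correct, and in fact goes beyond what the paper does: the paper provides no proof at all for this lemma, merely quoting it from reference~\cite{0}. Your cherry-decomposition argument is a valid self-contained verification. Concretely, rewriting $\beta(G)$ as a sum over vertices $w$ and unordered pairs $\{x,y\}\subseteq N_G(w)$ and using $\sum_{\{x,y\}}(d_G(x)+d_G(y))=(d_G(w)-1)\sum_{x\in N_G(w)}d_G(x)$ together with $\binom{d_G(w)}{2}$ yields
\[
\beta(G)=\sum_{w}d_G(w)^2\bigl(d_G(w)-1\bigr)\bigl(d_G(w)+m_G(w)-2\bigr),
\]
and then the identities $\sum_w d_G(w)^2 m_G(w)=2M_2(G)$ and $\sum_w d_G(w)^3 m_G(w)=\alpha(G)$ (the latter is exactly the first display in the proof of Lemma~\ref{al2}) give $\beta(G)=M_1^4(G)-3F(G)+2M_1(G)+\alpha(G)-2M_2(G)$, which is the claim. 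So your approach supplies what the paper omits; the only remark is that your ``alternative'' edge-pair route double-counts each incident pair unless you halve at the end, so the vertex-centered version you lead with is the cleaner one.
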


 \begin{lemma}\label{al2} Let $G$ be a graph with $m$ edges and $g(G)\geq 5$. If suppose that, $\eta_1(G)=\sum_{uv\in V(G)}M_2(G-\{ u, v\})$, then
 \begin{eqnarray*}
 &&\eta_1(G)=mM_2(G)-2\,EM_2(G)+\beta(G)-2F(G)-7M_2(G)+9M_1(G)-8m.
 \end{eqnarray*}
 \end{lemma}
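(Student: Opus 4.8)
The plan is to compute $M_2(G-\{u,v\})$ for a fixed edge $uv$ by tracking which vertex degrees change, and then sum over all $uv\in E(G)$. Fix $uv\in E(G)$ and put $A=N_G(u)\setminus\{v\}$ and $B=N_G(v)\setminus\{u\}$; these are exactly the vertices whose degree drops when $u$ and $v$ are deleted. Since $g(G)\geq 5$, $G$ has neither triangles nor $4$-cycles, so $A\cap B=\emptyset$, there is no edge of $G$ with both endpoints in $A\cup B$, and for every $x\notin\{u,v\}$ we have $d_{G-\{u,v\}}(x)=d_G(x)-\varepsilon(x)$, where $\varepsilon(x)=1$ if $x\in A\cup B$ and $\varepsilon(x)=0$ otherwise. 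Expanding
\[
M_2(G-\{u,v\})=\sum_{\substack{xy\in E(G)\\ x,y\notin\{u,v\}}}\bigl(d_G(x)-\varepsilon(x)\bigr)\bigl(d_G(y)-\varepsilon(y)\bigr),
\]
the term $\sum\varepsilon(x)\varepsilon(y)$ counts edges inside $A\cup B$ and hence vanishes; this is precisely where the hypothesis $g(G)\geq 5$ (and not merely $g(G)\geq 4$) is used. Consequently $M_2(G-\{u,v\})=P(uv)-Q(uv)$, where $P(uv)=\sum_{xy\in E(G),\,x,y\notin\{u,v\}}d_G(x)d_G(y)$ and $Q(uv)=\sum_{xy\in E(G),\,x,y\notin\{u,v\}}\bigl(d_G(x)\varepsilon(y)+d_G(y)\varepsilon(x)\bigr)$.

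For $P(uv)$ one removes from $M_2(G)$ the contributions of all edges meeting $u$ or $v$, obtaining $P(uv)=M_2(G)-d_G(u)^2m_G(u)-d_G(v)^2m_G(v)+d_G(u)d_G(v)$. Summing over $uv\in E(G)$ and using the degree identities $\sum_{w\in V(G)}d_G(w)^2m_G(w)=2M_2(G)$ (a variant of (\ref{1kan2})) and $\sum_{w\in V(G)}d_G(w)^3m_G(w)=\alpha(G)$ gives $\sum_{uv\in E(G)}P(uv)=(m+1)M_2(G)-\alpha(G)$.

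The core of the proof is $Q(uv)$. Triangle-freeness and the absence of $A$--$B$ edges show that $w\in A$ contributes $d_G(w)m_G(w)-d_G(u)$ and $w\in B$ contributes $d_G(w)m_G(w)-d_G(v)$, so $Q(uv)=\sum_{w\in A}\bigl(d_G(w)m_G(w)-d_G(u)\bigr)+\sum_{w\in B}\bigl(d_G(w)m_G(w)-d_G(v)\bigr)$. Summing over all $uv\in E(G)$ and again using standard degree identities, one finds $\sum_{uv\in E(G)}Q(uv)=\Sigma-2M_2(G)-F(G)+M_1(G)$, where $\Sigma:=\sum_{v\in V(G)}d_G(v)^2m_G(v)^2=\sum_{v\in V(G)}\bigl(\sum_{w\in N_G(v)}d_G(w)\bigr)^2$. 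To handle $\Sigma$, expand the inner square as $\sum_{w\in N_G(v)}d_G(w)^2+2\sum_{\{w,w'\}\subseteq N_G(v)}d_G(w)d_G(w')$; this gives $\Sigma=F(G)+2T$, where $T=\sum_{e\sim f}d_G(p)d_G(q)$, the sum running over all incident pairs of edges, with $v=e\cap f$ the common vertex and $p,q$ the remaining endpoints of $e$ and $f$. Writing $d_G(p)d_G(q)$ in terms of the edge-degrees $d_G(e)=d_G(v)+d_G(p)-2$ and $d_G(f)=d_G(v)+d_G(q)-2$ and summing over incident edge-pairs, using $\sum_{e\sim f}d_G(e)d_G(f)=EM_2(G)$ together with the elementary sums $\sum_{e\sim f}d_G(e\cap f)^2=\frac{1}{2}\bigl(M_1^4(G)-F(G)\bigr)$, $\sum_{e\sim f}d_G(e\cap f)=\frac{1}{2}\bigl(F(G)-M_1(G)\bigr)$, $\sum_{e\sim f}1=\frac{1}{2}\bigl(M_1(G)-2m\bigr)$, $\sum_{e\sim f}\bigl(d_G(p)+d_G(q)\bigr)=2M_2(G)-M_1(G)$ and $\sum_{e\sim f}d_G(e\cap f)\bigl(d_G(p)+d_G(q)\bigr)=\alpha(G)-2M_2(G)$, we obtain $T$, hence $\Sigma$, hence $\sum_{uv}Q(uv)$, explicitly in terms of $EM_2(G)$, $M_1^4(G)$, $\alpha(G)$, $F(G)$, $M_2(G)$, $M_1(G)$ and $m$.

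Subtracting, $\eta_1(G)=\sum_{uv}P(uv)-\sum_{uv}Q(uv)$ collapses to $mM_2(G)-2EM_2(G)+\alpha(G)+M_1^4(G)-5F(G)-9M_2(G)+11M_1(G)-8m$. The combination $\alpha(G)+M_1^4(G)$ is exactly the one controlled by Lemma \ref{al1}: from $\beta(G)-\alpha(G)=M_1^4(G)-3F(G)+2M_1(G)-2M_2(G)$ we get $\alpha(G)+M_1^4(G)=\beta(G)+3F(G)+2M_2(G)-2M_1(G)$, and substituting this yields the asserted formula. The main obstacle is the two-layer bookkeeping in the third paragraph --- first reducing $\sum_{uv}Q(uv)$ to the single quantity $\Sigma$, then re-expressing $\Sigma$ via sums over incident edge-pairs --- together with the subtler point that the vanishing of the $\varepsilon\varepsilon$-term truly requires the absence of $4$-cycles, so the full hypothesis $g(G)\geq 5$ (not just $g(G)\geq 4$) is essential.
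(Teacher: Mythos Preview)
Your proof is correct and follows essentially the same route as the paper's. Both arguments decompose $M_2(G-\{u,v\})$ into the ``edge-removal'' part $P(uv)$ and the ``degree-drop'' correction $Q(uv)$, reduce the summed correction to the quantity $\sum_{e\sim f} d_G(p)d_G(q)$ (the paper writes this as $\sum_{u\sim v\sim w} d_G(u)d_G(w)$), expand via edge degrees to bring in $EM_2(G)$, and finish with Lemma~\ref{al1}. The only differences are organizational: you isolate $\Sigma=\sum_v d_G(v)^2 m_G(v)^2$ as an intermediate quantity and choose an expansion of $d_G(p)d_G(q)$ that produces $\alpha(G)$ directly (avoiding $EM_1(G)$), whereas the paper's expansion first yields $\beta(G)$ and $EM_1(G)$ and then substitutes the identity $EM_1(G)=F(G)+2M_2(G)-4M_1(G)+4m$. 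Your explicit remark that the vanishing of the $\varepsilon(x)\varepsilon(y)$-term is exactly where $g(G)\geq 5$ (not merely $g(G)\geq 4$) is needed is a nice clarification that the paper leaves implicit.
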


 \begin{proof} One can easily see that
 \begin{eqnarray*}
 &&\sum_{uv\in E(G)}\,\Bigg[\sum_{x:xu\in E(G)}\,d_G(u)d_G(x)+\sum_{y:yv\in E(G)}\,d_G(v)d_G(y)\Bigg]\\
 &=&\sum_{uv\in E(G)}\,\Big[d_G(u)^2\,m_G(u)+d_G(v)^2\,m_G(v)\Big]\\
 &=&\sum_{u\in E(G)}\,d_G(u)^3\,m_G(u)\\
 &=&\sum_{u\in E(G)}\,d_G(u)^2\,\sum_{v:vu\in E(G)}\,d_G(v)\\
 &=&\sum_{uv\in E(G)}d_G(u)d_G(v)\Big[d_G(u)+d_G(v)\Big]=\alpha(G).
 \end{eqnarray*}

 \noindent
 If $u\sim v\sim w$ is a induced path graph in $G$ with $g(G)\geq 5$, then we have
 \begin{eqnarray*}
 &&\sum_{u\in V(G)}\,d_G(u)\,\sum_{v:vu\in E(G)}\,d_G(v)\,m_G(v)\\[2mm]
        &=&\sum_{u\in V(G)}\,d_G(u)\,\sum_{v:vu\in E(G)}\,\sum_{w:wv\in E(G)}\,d_G(w)\\[2mm]
        &=&2\,\sum_{u\sim v\sim w}\,d_G(u)\,d_G(w)+\sum_{u\in V(G)}\,d_G(u)\,\sum_{v:vu\in E(G)}\,d_G(u)\\[2mm]
        &=&2\,\sum_{uv=e\sim f=vw}\,\Big(d_G(e)-d_G(v)+2\Big)\,\Big(d_G(f)-d_G(v)+2\Big)+\sum_{u\in V(G)}\,d_G(u)^3\\[2mm]
        &=&2\,EM_2(G)+4\,EM_1(G)-2\,\beta(G)+M^4_1(G)-4F(G)+8M_1(G)-8m.
 \end{eqnarray*}

 \noindent
 Now,
 \begin{eqnarray*}
  &&\sum_{uv\in E(G)}\,\Bigg[\sum_{x:xu\in E(G),\atop x\neq v}\,\sum_{w:wx\in E(G),\atop w\neq u}\,d_G(w)+\sum_{y:yv\in E(G),\atop y\neq u}\,\sum_{w:wy\in E(G),\atop w\neq v}\,d_G(w)\Bigg]\\[2mm]
  &=&\sum_{uv\in E(G)}\,\Bigg[\sum_{x:xu\in E(G),\atop x\neq v}\,\Big(d_G(x)\,m_G(x)-d_G(u)\Big)+\sum_{y:yv\in E(G),\atop y\neq u}\,\Big(d_G(y)\,m_G(y)-d_G(v)\Big)\Bigg]\\[2mm]
  &=&\sum_{uv\in E(G)}\,\Bigg[\sum_{x:xu\in E(G)}\,\Big(d_G(x)\,m_G(x)-d_G(u)\Big)+\sum_{y:yv\in E(G)}\,\Big(d_G(y)\,m_G(y)-d_G(v)\Big)\\[2mm]
 &&\hspace*{5cm}-\Big(d_G(v)\,m_G(v)-d_G(u)+d_G(u)\,m_G(u)-d_G(v)\Big)\Bigg]\\[2mm]
 &=&\sum_{uv\in E(G)}\,\Bigg[\sum_{x:xu\in E(G)}\,d_G(x)\,m_G(x)+\sum_{y:yv\in E(G)}\,d_G(y)\,m_G(y)-d_G(u)^2-d_G(v)^2\Bigg]\\[2mm]
 &&\hspace*{4cm}-\sum_{uv\in E(G)}\,\Bigg[d_G(u)\,m_G(u)+d_G(v)\,m_G(v)-d_G(u)-d_G(v)\Bigg]\\[2mm]
 &=&\sum_{u\in V(G)}\,d_G(u)\,\sum_{v:vu\in E(G)}\,d_G(v)\,m_G(v)-F(G)-2\,M_2(G)+M_1(G)\\[2mm]
 &=&2\,EM_2(G)+4\,EM_1(G)-2\,\beta(G)+M^4_1(G)-5F(G)+9M_1(G)-2\,M_2(G)-8m.
 \end{eqnarray*}

 \vspace*{3mm}

 \noindent
 Using the above results, we obtain
 \begin{eqnarray*}
 \eta_1(G)&=&\sum_{uv\in E(G)}\,\Bigg[M_2(G)+d_G(u)d_G(v)-\sum_{x:xu\in E(G)}\,d_G(u)d_G(x)-\sum_{y:yv\in E(G)}\,d_G(v)d_G(y)\\[2mm]
 &&\hspace*{4cm}-\sum_{x:xu\in E(G),\atop x\neq v}\,\sum_{w:wx\in E(G),\atop w\neq u}\,d_G(w)-\sum_{y:yv\in E(G),\atop y\neq u}\,\sum_{w:wy\in E(G),\atop w\neq v}\,d_G(w)\Bigg]
 \end{eqnarray*}
 \begin{eqnarray*}
 &=&m\,M_2(G)-\sum_{uv\in E(G)}\,\Bigg[\sum_{x:xu\in E(G)}\,d_G(u)d_G(x)+\sum_{y:yv\in E(G)}\,d_G(v)d_G(y)-d_G(u)d_G(v)\Bigg]\\[2mm]
 &&\hspace*{2cm}-\sum_{uv\in E(G)}\,\Bigg[\sum_{x:xu\in E(G),\atop x\neq v}\,\sum_{w:wx\in E(G),\atop w\neq u}\,d_G(w)+\sum_{y:yv\in E(G),\atop y\neq u}\,\sum_{w:wy\in E(G),\atop w\neq v}\,d_G(w)\Bigg]\\[2mm]
 &=&mM_2(G)-\alpha(G)-2\,EM_2(G)-4\,EM_1(G)+2\,\beta(G)-M^4_1(G)+5F(G)-9M_1(G)\\[2mm]
 &&\hspace*{6cm}+3\,M_2(G)+8m.
 \end{eqnarray*}
 Moreover,
 \begin{equation*}\label{equa1}
 EM_1(G)=\sum_{uv=e\in E(G)}\,\Big(d_G(u)+d_G(v)-2\Big)^2=F(G)+2M_2(G)-4M_1(G)+4m.
 \end{equation*}
 Using the above results with Lemma \ref{al1}, we get the required result.
 \end{proof}

\begin{lemma}\label{al3}
Let  $G$ be a graph with $m$ edges. If $g(G)>3$, then $\gamma(G)$
$=$ $2EM_2(G)$ $-$ $\beta(G)$ $+$ $4EM_1(G)$ $-$ $2F(G)+6M_1(G)-8m$.
\end{lemma}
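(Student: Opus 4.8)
The plan is to re-index the sum defining $\gamma(G)$ by paths of length two (equivalently, by incident pairs of edges) and then match it term-by-term against the path-indexed expansions of $\beta(G)$, $EM_1(G)$ and $EM_2(G)$.

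First I would exploit the hypothesis $g(G)>3$. If $\{v,xy\}\in\Lambda(G)$ then $v$ is adjacent to $x$ or to $y$; were it adjacent to both, $v,x,y$ would span a triangle, contradicting $g(G)>3$. Hence $v$ is adjacent to exactly one endpoint, say $v\sim x$, and the pair $\{v,xy\}$ is determined by the $2$-path $v-x-y$ together with the choice of the end $v$. Since a triangle-free graph contains no two distinct $2$-paths on the same vertex set, each (unordered) $2$-path $a-b-c$ with middle vertex $b$ corresponds to exactly the two elements $\{a,bc\}$ and $\{c,ab\}$ of $\Lambda(G)$, and such $2$-paths are precisely the incident pairs of edges $e\sim f$ with $e\cap f=b$. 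Writing $p=d_G(a)$, $q=d_G(b)$, $r=d_G(c)$, these two $\Lambda$-elements contribute $p(q+r)+r(p+q)=pq+2pr+qr$ to $\gamma(G)$, so $\gamma(G)=\sum_{a-b-c}\big(pq+2pr+qr\big)$, the sum being over all $2$-paths of $G$.

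Next I would record the analogous per-$2$-path summands for the other three invariants, using $d_G(ab)=p+q-2$ and $d_G(bc)=q+r-2$: the $\beta$-summand is $q(p+2q+r-4)$, the $EM_1$-summand is $(p+q-2)+(q+r-2)=p+2q+r-4$, and the $EM_2$-summand is $(p+q-2)(q+r-2)$. A direct expansion then yields, summand by summand over $2$-paths,
\[
2(p+q-2)(q+r-2)-q(p+2q+r-4)+4(p+2q+r-4)=\big(pq+2pr+qr\big)+4q-8,
\]
so that $2EM_2(G)-\beta(G)+4EM_1(G)=\gamma(G)+\sum_{a-b-c}(4q-8)$.

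Finally I would evaluate the correction term by counting $2$-paths through a fixed middle vertex: $\sum_{a-b-c}1=\sum_{b\in V(G)}\binom{d_G(b)}{2}=\tfrac12\big(M_1(G)-2m\big)$ and $\sum_{a-b-c}d_G(b)=\sum_{b\in V(G)}d_G(b)\binom{d_G(b)}{2}=\tfrac12\big(F(G)-M_1(G)\big)$. Hence $\sum_{a-b-c}(4q-8)=2F(G)-6M_1(G)+8m$, and rearranging gives the claimed identity. The only point that genuinely needs care is the combinatorial bookkeeping of the $\Lambda(G)$-to-$2$-path correspondence, where triangle-freeness is essential; the remaining steps are a routine polynomial identity and two standard degree-sum evaluations.
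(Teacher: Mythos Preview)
Your proof is correct and follows essentially the same approach as the paper: both re-index $\gamma(G)$ over incident edge pairs (equivalently, $2$-paths), rewrite the summand in terms of edge degrees, and collect into $EM_2$, $\beta$, $EM_1$ plus residual terms evaluated by counting $2$-paths through their middle vertex. Your write-up is in fact more explicit than the paper's about where triangle-freeness enters and about the evaluation of the correction sum $\sum_{a\text{-}b\text{-}c}(4q-8)$, which the paper absorbs into a single displayed computation.
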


 \begin{proof} By definition,
 \begin{small}
 \begin{eqnarray*}
\gamma(G)&=&\sum_{\{v,xy\}\in \Lambda(G)}d_G(v)\Big[d_G(x)+d_G(y)\Big]\\
&=&\sum_{e=uv\sim f=
vw}\,\Big[\Big(d_G(e)-d_G(v)+2\Big)\Big(d_G(f)+2\Big)+\Big(d_G(f)-d_G(v)+2\Big)\Big(d_G(e)+2\Big)\Big]\\[2mm]
&=&2EM_2(G)-\beta(G)+4EM_1(G)-2F(G)+6M_1(G)-8m,
\end{eqnarray*}
 \end{small}
 as desired.
\end{proof}

\begin{theorem}\label{1rl1}
{\rm (See \cite{2} )}
Let $G$ be a graph with $m$ edges. Then
\begin{eqnarray*}
p(G,2)&=&  \frac{1}{2}m^2+\frac{1}{2}m-\frac{1}{2}M_1(G).
\end{eqnarray*}
\end{theorem}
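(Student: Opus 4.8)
The plan is to specialize the matching recursion (Equation \ref{eq1}) to $k=2$ and then invoke the first part of Lemma \ref{1ml1}. Setting $k=2$ in Equation \ref{eq1} gives
\[
p(G;2)=\frac{1}{2}\sum_{uv\in E(G)}p\bigl(G-\{u,v\};1\bigr)=\frac{1}{2}\sum_{uv\in E(G)}m\bigl(G-\{u,v\}\bigr)=\frac{1}{2}\,\mu_1(G),
\]
where the middle equality uses $p(H;1)=|E(H)|$ for every graph $H$, and the last equality simply records that the resulting edge-sum is the quantity $\mu_1(G)$ from Lemma \ref{1ml1}. Here one uses the elementary identity $m(G-\{u,v\})=m-d_G(u)-d_G(v)+1$, the $+1$ correcting for the edge $uv$ being counted once in $d_G(u)$ and once in $d_G(v)$. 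By part (1) of Lemma \ref{1ml1}, $\mu_1(G)=m^2+m-M_1(G)$, and dividing by $2$ produces exactly $p(G,2)=\tfrac12 m^2+\tfrac12 m-\tfrac12 M_1(G)$.

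Alternatively, one can bypass the recursion with a direct count. A $2$-matching is an unordered pair of edges with no common endpoint, so
\[
p(G;2)=\binom{m}{2}-\sum_{v\in V(G)}\binom{d_G(v)}{2},
\]
the subtracted term counting the pairs of distinct edges that meet at a common vertex; this double count over vertices is exact because in a simple graph two distinct edges share at most one vertex. Using $\sum_{v\in V(G)}d_G(v)=2m$ and $\sum_{v\in V(G)}d_G(v)^2=M_1(G)$, the right-hand side equals $\tfrac{m^2-m}{2}-\tfrac{M_1(G)-2m}{2}=\tfrac12 m^2+\tfrac12 m-\tfrac12 M_1(G)$, the same answer.

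There is essentially no serious obstacle here: the statement is a short corollary of Lemma \ref{1ml1}(1) via Equation \ref{eq1}. The only points needing (minor) care are the edge-count formula $m(G-\{u,v\})=m-d_G(u)-d_G(v)+1$ — valid for every simple graph, and in particular requiring no girth hypothesis — and, along the combinatorial route, the observation that distinct edges of a simple graph overlap in at most one vertex so that the vertexwise count of adjacent edge pairs is not an overcount. The remaining manipulation is the routine degree-sum evaluation already packaged inside Lemma \ref{1ml1}.
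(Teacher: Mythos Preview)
Your proof is correct and follows essentially the same route as the paper: specialize Equation~\ref{eq1} to $k=2$, rewrite $p(G-\{u,v\};1)$ as $m(G-\{u,v\})$, recognize the resulting sum as $\mu_1(G)$, and apply Lemma~\ref{1ml1}(1). Your alternative direct-count argument via $\binom{m}{2}-\sum_v\binom{d_G(v)}{2}$ is a nice self-contained addition that the paper does not include, but it is not needed for the result.
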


 \begin{proof}
 By Equation \ref{eq1} and Lemma \ref{1ml1},
 \begin{eqnarray*}
  p(G;2)=\frac{1}{2}\sum_{uv\in E(G)}p(G-\{u,v\};1)&=&\frac{1}{2}\sum_{uv\in E(G)}m(G-\{u,v\})\\
  &=&\frac{1}{2}\mu_1(G)=\frac{1}{2}m^2+\frac{1}{2}m-\frac{1}{2}M_1(G),
 \end{eqnarray*}
 as desired.
 \end{proof}

\begin{theorem}\label{1rl2}
{\rm (See \cite{2} )} Let $G$ be a graph with $m$ edges and
$g(G)>3$. Then
\begin{eqnarray*}
p(G,3)&=&\frac{1}{6}m^3+\frac{1}{2}m^2+\frac{2}{3}m-\frac{1}{2}mM_1(G)-M_1(G)+\frac{1}{3}F(G)+M_2(G).
\end{eqnarray*}
\end{theorem}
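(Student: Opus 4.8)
The plan is to apply the basic matching recurrence (\ref{eq1}) with $k=3$, which reduces the problem to summing the known $2$-matching formula over all edge-deleted subgraphs. Concretely, $p(G;3)=\frac{1}{3}\sum_{uv\in E(G)}p(G-\{u,v\};2)$, and by Theorem \ref{1rl1} applied to each $G-\{u,v\}$ we have $p(G-\{u,v\};2)=\frac{1}{2}m(G-\{u,v\})^2+\frac{1}{2}m(G-\{u,v\})-\frac{1}{2}M_1(G-\{u,v\})$. Summing termwise and recognising that $M_1^2=M_1$ (so that $\xi_1(G)=\sum_{uv\in E(G)}M_1(G-\{u,v\})$ is exactly the degree-term that appears), one obtains
$$p(G;3)=\frac{1}{6}\bigl[\mu_2(G)+\mu_1(G)-\xi_1(G)\bigr],$$
where $\mu_1,\mu_2$ come from Lemma \ref{1ml1} and $\xi_1$ from Lemma \ref{l1}.

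First I would record the three closed forms that feed into this: $\mu_1(G)=m^2+m-M_1(G)$, the expression for $\mu_2(G)$ in Lemma \ref{1ml1}(2), and $\xi_1(G)=(m+3)M_1(G)-F(G)-4M_2(G)-2m$ from Lemma \ref{l1}(1). This is the one place where the hypothesis $g(G)>3$ is genuinely used: the evaluation of $\xi_1$ relies on the two endpoints of an edge having disjoint neighbourhoods, so that no vertex loses two units of degree when $\{u,v\}$ is deleted; the triangle-free assumption is precisely what licenses Lemma \ref{l1}. No girth condition is needed for $\mu_1,\mu_2$, since those only count edges.

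Then I would substitute and collect. Adding $\mu_2(G)+\mu_1(G)$ and subtracting $\xi_1(G)$, the two copies of $F(G)$ combine to $2F(G)$, the $M_2(G)$-terms to $6M_2(G)$, the $M_1$-linear terms to $-3mM_1(G)-6M_1(G)$, and the pure powers of $m$ to $m^3+3m^2+4m$; dividing by $6$ gives exactly $\frac{1}{6}m^3+\frac{1}{2}m^2+\frac{2}{3}m-\frac{1}{2}mM_1(G)-M_1(G)+\frac{1}{3}F(G)+M_2(G)$. The computation is purely mechanical, so there is no real obstacle here: the only content is realising that the edge-sum produced by the recurrence is a fixed linear combination of $\mu_1,\mu_2,\xi_1$, and keeping careful track of the single point where $g(G)>3$ enters, namely the $M_1(G-\{u,v\})$ term.
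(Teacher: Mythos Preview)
Your proof is correct and follows exactly the paper's approach: applying the recurrence (\ref{eq1}) together with Theorem~\ref{1rl1} to obtain $p(G;3)=\frac{1}{6}\bigl[\mu_2(G)+\mu_1(G)-\xi_1(G)\bigr]$, and then substituting the closed forms from Lemmas~\ref{1ml1} and~\ref{l1}. Your additional remarks on where the girth hypothesis enters (only through $\xi_1$) and the explicit collection of terms make the argument more transparent than the paper's terse version, but the underlying proof is the same.
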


\begin{proof}
Apply Equation \ref{eq1} and  Theorem \ref{1rl1}, we obtain
$p(G;3)=\frac{1}{6}\mu_2(G)+\frac{1}{6}\mu_1(G)-\frac{1}{6}\xi_1(G).$
Now the theorem follows from Lemmas \ref{1ml1} and  \ref{l1}.
\end{proof}

\begin{theorem}\label{1rl3}
Let $G$ be a graph with $m$ edges and $g(G)\geq 5$. Then
\begin{small}
\begin{eqnarray*}
p(G,4)&=&\frac{1}{24}m^4+\frac{1}{4}m^3+\frac{19}{24}m^2-\frac{11}{4}m
 +\frac{1}{8}(M_1(G))^2+\frac{1}{3}mF(G)-\frac{1}{4}m^2M_1(G)+\\
 &&mM_2(G)+\frac{1}{4}M_1^4(G)
-2M_2(G) -\frac{5}{4}mM_1(G)+\frac{7}{2}
M_1(G)-EM_2(G)-\frac{3}{2}F(G).
\end{eqnarray*}
\end{small}
\end{theorem}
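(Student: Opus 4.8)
The plan is to iterate one step further the construction behind Theorems \ref{1rl1} and \ref{1rl2}. By the recursion (\ref{eq1}),
\[
p(G;4)=\frac14\sum_{uv\in E(G)}p\big(G-\{u,v\};3\big).
\]
Since $G-\{u,v\}$ is an induced subgraph of $G$, every cycle of $G-\{u,v\}$ is a cycle of $G$, so $g(G-\{u,v\})\ge g(G)\ge 5>3$ and Theorem \ref{1rl2} applies to each $G-\{u,v\}$. Substituting that formula for $p(G-\{u,v\};3)$ and summing termwise over $uv\in E(G)$, and recalling the definitions of $\mu_k$ (Lemma \ref{1ml1}), $\xi_{k-1}$ (Lemma \ref{l1}), $\varrho_1$ (Lemma \ref{cl1}) and $\eta_1$ (Lemma \ref{al2}), I obtain the master identity
\[
p(G;4)=\frac14\Big[\tfrac16\,\mu_3(G)+\tfrac12\,\mu_2(G)+\tfrac23\,\mu_1(G)-\tfrac12\,\varrho_1(G)-\xi_1(G)+\tfrac13\,\xi_2(G)+\eta_1(G)\Big].
\]

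The next step is to insert the closed forms of $\mu_1,\mu_2,\mu_3$ (Lemma \ref{1ml1}), of $\xi_1,\xi_2$ (Lemma \ref{l1}), of $\varrho_1$ (Lemma \ref{cl1}) and of $\eta_1$ (Lemma \ref{al2}). After this the right-hand side is a linear combination of powers of $m$ and of the invariants $M_1(G)$, $(M_1(G))^2$, $F(G)$, $M_2(G)$, $M_1^4(G)$, $EM_1(G)$, $EM_2(G)$, $\alpha(G)$, $\beta(G)$ and $\gamma(G)$; note that $M_1^5(G)$ and $\alpha_2(G)$ never enter, since we only reach $\mu_3$ and $\xi_2$. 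To remove the invariants that should not appear in the answer I use Lemma \ref{al1} in the form $\beta(G)=\alpha(G)+M_1^4(G)-3F(G)+2M_1(G)-2M_2(G)$, Lemma \ref{al3} to express $\gamma(G)$ through $EM_2(G)$, $\beta(G)$, $EM_1(G)$, $F(G)$, $M_1(G)$ and $m$, and the identity $EM_1(G)=F(G)+2M_2(G)-4M_1(G)+4m$ that is established inside the proof of Lemma \ref{al2}.

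With these substitutions made, one verifies the cancellations. The direct coefficient of $\alpha(G)$ is $-\tfrac18-\tfrac14-\tfrac18=-\tfrac12$ (contributions from $\mu_3$, $\xi_2$ and $\varrho_1$), while the $\gamma(G)$ inside $\varrho_1$ and the $\beta(G)$ inside $\eta_1$ feed back a total of $+\tfrac12\,\beta(G)=+\tfrac12\,\alpha(G)+\cdots$, so $\alpha(G)$ disappears; likewise $\beta(G)$, $\gamma(G)$ and $EM_1(G)$ are fully absorbed and do not appear in the final expression. What remains is a combination of powers of $m$ together with $M_1(G)$, $(M_1(G))^2$, $F(G)$, $M_2(G)$, $M_1^4(G)$ and $EM_2(G)$: for instance the two $-2\,EM_2(G)$ contributions (one from $\eta_1$, one routed through $\gamma(G)$ into $\varrho_1$) combine to $-EM_2(G)$, the single $-M_1(G)^2$ inside $\varrho_1$ produces $+\tfrac18(M_1(G))^2$, and the pure $m$-part collects to $\tfrac1{24}m^4+\tfrac14m^3+\tfrac{19}{24}m^2-\tfrac{11}{4}m$. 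Gathering all the coefficients yields exactly the stated formula.

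The only genuine difficulty is the bookkeeping: seven polynomial identities, several of them with ten or more terms, must be combined and the vanishing of the coefficients of $\alpha$, $\beta$, $\gamma$ and $EM_1$ checked carefully, since a single slip introduces a spurious non-degree-based term. I would organise the computation by first isolating the coefficient of each of $\alpha(G)$, $\beta(G)$, $\gamma(G)$ and $EM_1(G)$ and confirming it vanishes, then separately collecting the $m$-polynomial and the coefficient of each of $M_1(G)$, $(M_1(G))^2$, $F(G)$, $M_2(G)$, $M_1^4(G)$ and $EM_2(G)$, and finally comparing with the statement. The hypothesis $g(G)\ge5$ is used precisely where Lemmas \ref{cl1} and \ref{al2} require it, and to legitimise applying Theorem \ref{1rl2} to every $G-\{u,v\}$.
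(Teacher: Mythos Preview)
Your proof is correct and follows exactly the paper's approach: the master identity
\[
p(G;4)=\tfrac{1}{24}\mu_3(G)+\tfrac{1}{8}\mu_2(G)+\tfrac{1}{6}\mu_1(G)-\tfrac{1}{8}\varrho_1(G)-\tfrac{1}{4}\xi_1(G)+\tfrac{1}{12}\xi_2(G)+\tfrac{1}{4}\eta_1(G)
\]
is precisely what the paper records, and the subsequent substitution of Lemmas \ref{1ml1}--\ref{al3} together with the $EM_1$ identity is the same route (you simply spell out the cancellations that the paper leaves implicit). One small imprecision: the girth hypothesis is also needed for Lemmas \ref{l1} and \ref{al3}, and Lemma \ref{cl1} itself only requires $g(G)\ge4$; the full strength $g(G)\ge5$ is forced solely by Lemma \ref{al2}.
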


\begin{proof}
By Equation \ref{eq1} and Theorem \ref{1rl2}, we obtain
\begin{eqnarray*}
 p(G;4) &=&\frac{1}{24}\mu_3(G)+\frac{1}{8}\mu_2(G)+\frac{1}{6}\mu_1(G)-\frac{1}{8}\varrho_1(G)-\frac{1}{4}\xi_1(G)
 +\frac{1}{12}\xi_2(G)+\frac{1}{4}\eta_1(G).
 \end{eqnarray*}
Now, by Lemmas  \ref{1ml1}--\ref{al3} and Equation \ref{equa1}, that
gives the result.
\end{proof}

\begin{theorem}\label{aat1}
Let $G$ be a graph with $m$ edges and $g(G)\geq 5$. Then
\begin{small}
\begin{eqnarray*}
 &&\hspace*{-7mm}p(G;5)\\
 &=&\frac{1}{5}\Bigg[\frac{1}{24}m(m^4+10m^3+43m^2+54m-328)+\frac{5}{4}(M_1(G))^2-\frac{1}{2}\alpha(G)(m-7)
 -\frac{5}{6}\alpha_2(G)\\[2mm]
&&-\frac{1}{12}M_1(G)(2m^3+30m^2+61m-225)+\frac{1}{2}\beta(G)+\frac{1}{12}M_2(G)(6m^2+66m-239)\\[2mm]
&&+\frac{1}{24}F(G)(6m^2+24m-149)+\frac{1}{12}M_1^4(G)(m+10)+\frac{1}{4}M_2^2(G)-EM_2(G)-\frac{5}{24}M_1^5(G)\\[2mm]
&&+\frac{1}{8}\sum_{uv\in E(G)}(M_1(G-\{u,v\}))^2+\frac{1}{3}\sum_{uv\in E(G)}m(G-\{u,v\})F(G-\{u,v\})\\[2mm]
 &&-\frac{1}{4}\sum_{uv\in E(G)}m^2(G-\{u,v\})M_1(G-\{u,v\})-\sum_{uv\in E(G)}EM_2(G-\{u,v\})\\[2mm]
 &&+\sum_{uv\in E(G)}m(G-\{u,v\})M_2(G-\{u,v\})  \Bigg].
\end{eqnarray*}
\end{small}
\end{theorem}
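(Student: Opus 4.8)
The plan is to apply the recurrence \ref{eq1} once more. Since $p(G;5)=\frac{1}{5}\sum_{uv\in E(G)}p(G-\{u,v\};4)$, it is enough to substitute into each summand the formula of Theorem \ref{1rl3}. This is legitimate: removing two vertices cannot shorten a cycle, since every cycle of $G-\{u,v\}$ is already a cycle of $G$, so $g(G-\{u,v\})\geq g(G)\geq 5$ and Theorem \ref{1rl3} applies to $G-\{u,v\}$ verbatim. After the substitution, $p(G;5)$ is $\frac{1}{5}$ times a sum over the edges $uv$ of $G$ of one fixed linear combination of the quantities $m(G-\{u,v\})^i$ for $i=1,2,3,4$; of $M_1(G-\{u,v\})$, $F(G-\{u,v\})$, $M_1^4(G-\{u,v\})$, $M_2(G-\{u,v\})$, $EM_2(G-\{u,v\})$; of $m(G-\{u,v\})M_1(G-\{u,v\})$; and of the four further mixed quantities $(M_1(G-\{u,v\}))^2$, $m(G-\{u,v\})F(G-\{u,v\})$, $m(G-\{u,v\})^2M_1(G-\{u,v\})$ and $m(G-\{u,v\})M_2(G-\{u,v\})$.

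Next I would evaluate, term by term, the edge-sums that the earlier lemmas already compute. By Lemma \ref{1ml1}, $\sum_{uv\in E(G)}m(G-\{u,v\})^i=\mu_i(G)$ for $i=1,2,3,4$. By Lemma \ref{l1}, and using the paper's convention $M_1^2=M_1$, $M_1^3=F$, one has $\sum_{uv\in E(G)}M_1(G-\{u,v\})=\xi_1(G)$, $\sum_{uv\in E(G)}F(G-\{u,v\})=\xi_2(G)$ and $\sum_{uv\in E(G)}M_1^4(G-\{u,v\})=\xi_3(G)$. By Lemma \ref{cl1}, $\sum_{uv\in E(G)}m(G-\{u,v\})M_1(G-\{u,v\})=\varrho_1(G)$, and by Lemma \ref{al2}, $\sum_{uv\in E(G)}M_2(G-\{u,v\})=\eta_1(G)$. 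The remaining five sums --- of $(M_1(G-\{u,v\}))^2$, $m(G-\{u,v\})F(G-\{u,v\})$, $m(G-\{u,v\})^2M_1(G-\{u,v\})$, $EM_2(G-\{u,v\})$ and $m(G-\{u,v\})M_2(G-\{u,v\})$ --- are not among the quantities evaluated in the preceding lemmas, so they are carried along unchanged; these are precisely the five summations on the right-hand side of the statement, with coefficients $\frac{1}{8}$, $\frac{1}{3}$, $-\frac{1}{4}$, $-1$, $1$ coming from Theorem \ref{1rl3} and the overall factor $\frac{1}{5}$.

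After this step, $p(G;5)$ is $\frac{1}{5}$ times an explicit linear combination of $\mu_1,\ldots,\mu_4$, $\xi_1,\xi_2,\xi_3$, $\varrho_1$, $\eta_1$ and the five leftover sums. I would then insert the closed forms of Lemmas \ref{1ml1}, \ref{l1}, \ref{cl1} and \ref{al2}. This produces $\gamma(G)$ (from $\varrho_1$) and $\beta(G)$, $EM_2(G)$ (from $\eta_1$), along with $\alpha(G)$, $\alpha_2(G)$, $M_2^2(G)$, $M_1^4(G)$, $M_1^5(G)$ and $(M_1(G))^2$. I would eliminate $\gamma(G)$ via Lemma \ref{al3}, which in turn introduces $EM_1(G)$, and then eliminate $EM_1(G)$ via Equation \ref{equa1}. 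Lemma \ref{al1} is not needed here: in the final expression the coefficient of $\alpha(G)$ is $m$-dependent while that of $\beta(G)$ is constant, so they do not pair up into the combination $\beta(G)-\alpha(G)$ and both survive. What remains is a linear combination of $m$ and its powers together with $M_1(G)$, $(M_1(G))^2$, $F(G)$, $M_2(G)$, $M_2^2(G)$, $M_1^4(G)$, $M_1^5(G)$, $\alpha(G)$, $\alpha_2(G)$, $\beta(G)$, $EM_2(G)$ and the five leftover edge-sums --- exactly the shape asserted, once terms are collected.

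The argument is conceptually routine; the real labour is the volume of the algebra. The expansions of $\mu_1,\ldots,\mu_4$, $\xi_1,\xi_2,\xi_3$, $\varrho_1$ and $\eta_1$ each contribute many monomials, and one must gather the coefficient of every invariant --- in particular the degree-five polynomial in $m$ assembled from the pure-$m$ parts of the $\mu_i$ together with the $m$-dependent pieces of $\xi_j$, $\varrho_1$ and $\eta_1$, and likewise the coefficients of $M_1(G)$, $F(G)$ and $M_2(G)$ --- without slips. I expect this bookkeeping, best organised as a table recording each target invariant's contribution from each of $\mu_i$, $\xi_j$, $\varrho_1$, $\eta_1$ and from the substitutions for $\gamma(G)$ and $EM_1(G)$, to be the only real obstacle. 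The hypothesis $g(G)\geq 5$ enters exactly where Theorem \ref{1rl3} and Lemma \ref{al2} require it; Lemma \ref{al3} needs only $g(G)>3$.
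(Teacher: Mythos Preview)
Your proposal is correct and follows exactly the paper's approach: apply the recurrence (\ref{eq1}) together with Theorem \ref{1rl3} to write $p(G;5)$ as $\frac{1}{5}$ times the combination $\frac{1}{24}\mu_4+\frac{1}{4}\mu_3+\frac{19}{24}\mu_2-\frac{11}{4}\mu_1+\frac{1}{4}\xi_3-\frac{3}{2}\xi_2+\frac{7}{2}\xi_1-\frac{5}{4}\varrho_1-2\eta_1$ plus the five unresolved edge-sums, and then substitute Lemmas \ref{1ml1}--\ref{al3} and Equation (\ref{equa1}). Your remark that Lemma \ref{al1} is not invoked directly is accurate; it enters only through the proof of Lemma \ref{al2}.
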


\begin{proof} By Equation \ref{eq1} and Theorem \ref{1rl3}, we obtain

\begin{small}
\begin{eqnarray*}
 &&\hspace*{-7mm}p(G;5)\\
&=&\frac{1}{5}\Big[\frac{1}{24}\mu_4(G) +\frac{1}{4}\mu_3(G)+\frac{19}{24}\mu_2(G)-\frac{11}{4}\mu_1(G)
 +\frac{1}{8}\sum_{uv\in E(G)}(M_1(G-\{u,v\}))^2\\[2mm]
 &&+\frac{1}{3}\sum_{uv\in E(G)}m(G-\{u,v\})F(G-\{u,v\})-\frac{1}{4}\sum_{uv\in E(G)}m^2(G-\{u,v\})M_1(G-\{u,v\})\\[2mm]
 &&+\sum_{uv\in E(G)}m(G-\{u,v\})M_2(G-\{u,v\})+\frac{1}{4}\xi_3(G)
-2\eta_1(G) -\frac{5}{4}\varrho_1(G)\\[2mm]
&&+\frac{7}{2}\xi_1(G)-\sum_{uv\in
E(G)}EM_2(G-\{u,v\})-\frac{3}{2}\xi_2(G)\Big],
\end{eqnarray*}
\end{small}
Now,    Lemmas \ref{1ml1}--\ref{al3}, and Equation \ref{equa1} give
the result.
\end{proof}
\section{Applications}

In this section, we apply our results to calculate $p(G,k)$, $k = 3,4,5,6$ for some known graphs.
\begin{example}\label{aex1}
In this example, we consider the path $P_n$ with $n$ vertices and $m
= n-1$ edges. A simple calculation shows that $M_1(P_n)$ $=$ $4n-6$,
$F(G) = 8n-14$, $M_1^4(G)= 16n-30$, $M_1^5(P_n)= 32n-62$, $M_2(P_n)=
4n-8$, $\alpha(P_n)= 16n-36$, $\alpha_2(P_n)$ $=$ $32n-76$,
$M_2^2(P_n)= 16n-40$, $\beta(P_n)= 8n-20$ and $EM_2(P_n)= 4n-12$.
Furthermore, $\sum_{uv\in E(P_n)}M_1(P_n-\{u,v\})^2$ $=$
$16(n-4)(n^2-7n+15)$, $\sum_{uv\in
E(P_n)}m(P_n-\{u,v\})F(P_n-\{u,v\})$ $=$ $8n^3-84n^2+308n-396,$
$\sum_{uv\in E(P_n)}EM_2(P_n-\{u,v\})$ $=$ $4n^2-36n+82$,
$\sum_{uv\in E(P_n)}m^2(P_n-\{u,v\})M_1(P_n-\{u,v\})$ $=$
$4n^4-56n^3+308n^2-784n+772$ and $\sum_{uv\in
E(P_n)}m(P_n-\{u,v\})M_2(P_n-\{u,v\})$ $=$ $2(n-4)(2n^2-14n+29)$.
Therefore, by Theorems \ref{1rl2}--\ref{aat1}, we obtain
\begin{eqnarray*}
p(P_n;3)&=&\frac{1}{6}(n-3)(n-4)(n-5),\\
p(P_n;4)&=&\frac{1}{24}(n-4)(n-5)(n-6)(n-7),\\
p(P_n;5)&=&\frac{1}{120}(n-5)(n-6)(n-7)(n-8)(n-9).
\end{eqnarray*}
\end{example}
In general, we obtain the following result. For this let $G$ be a
graph with given edge $uv$. Then $p(G,uv;k)$ denotes the number of
$k$-matchings in $G$ that contain the edge $uv$.
\begin{example} \label{ak1} Let $P_n$ be a path with $n$ vertices and a positive integer $k\leq n/2$. Then
$$p(P_n;k)={n-k \choose k}.$$
We prove this result by mathematical induction on $n$. For $n=2$,
$p(P_2;1)={1 \choose 1}=1$, and the result holds. Suppose that $n>2$
and $P_n:\,v_1v_2v_3\ldots v_n$. Then by definition and induction
hypothesis we have,
\begin{eqnarray*}
p(P_n;k)&=&p(P_n,v_1v_2;k)+p(P_n,v_2v_3;k)+p(P_{n-2};k)\\[2mm]
&=&p(P_{n-2};k-1)+p(P_{n-3};k-1)+p(P_{n-2};k)\\[2mm]
&=&{n-k-1\choose k-1}+{n-k-2\choose k-1}+{n-k-2\choose k}={n-k
\choose k}.
\end{eqnarray*}
\end{example}

\begin{example}
Consider the $n-$cycle $C_n$. Apply Equation \ref{eq1} and Lemma
\ref{ak1}, we obtain
$$p(C_n;k)=\frac{1}{k}\sum_{uv\in E(C_n)}\,p(C_{n}-\{u,v\};k-1)=\frac{n}{k}\,p(P_{n-2};k-1)=\frac{n}{k}{n-k-1 \choose k-1}.$$
By applying this equality, one can easily see that
\begin{eqnarray*}
p(C_n;3)&=&\frac{n}{6}(n-4)(n-5),\\
p(C_n;4)&=&\frac{n}{24}(n-5)(n-6)(n-7),\\
p(C_n;5)&=&\frac{n}{120}(n-6)(n-7)(n-8)(n-9),\\
p(C_n;6)&=&\frac{n}{720}(n-7)(n-8)(n-9)(n-10)(n-11).
\end{eqnarray*}
\end{example}

\begin{example}\label{aex2}
Let the vertices of the path $P_{k}$ be numbered consecutively by
$1,2,\ldots,k$. Construct the graph $P_{k,k-4}$ by attaching a
pendent vertex at positions $3$ to $k-2$ of the $k$-vertex path. By
simple calculations, Theorems \ref{1rl2}--\ref{aat1} we have
\begin{eqnarray*}
p(P_{k,k-4};2)&=&2k(k-7)+25~\mbox{ for}~k\geq 4,\\
p(P_{k,k-4};3)&=&\frac{1}{3}(2k-9)(2k^2-18k+43)~\mbox{ for}~k\geq 5,\\
p(P_{k,k-4};4)&=&\frac{2}{3}k(k-11)(k^2-11k+64)+681~\mbox{ for}~k\geq 6,\\
p(P_{k,k-4};5)&=&\frac{1}{15}(2k-13)(2k^4-52k^3+522k^2-2392k+4215)~\mbox{
for}~k\geq 7.
\end{eqnarray*}
\end{example}

\begin{example}
Let the vertices of the cycle $C_{k}$ be numbered consecutively by $1,2,\ldots,k$. Construct
the graph $C_{k,k}$ by attaching a pendent vertex at positions $1$ to $k$ of the $k$-vertex cycle. By Equation \ref{eq1},\\
\begin{eqnarray*}
p(C_{k,k};r)&=&\frac{1}{r}\Big[\sum_{i=1}^kp(P_{k+1,k-3};r-1)+\sum_{i=1}^kp(P_{k,k-4};r-1)\Big]\\
&=&\frac{k}{r}\Big[p(P_{k+1,k-3};r-1)+p(P_{k,k-4};r-1)\Big].
\end{eqnarray*}
 Now, by  Example \ref{aex2},
\begin{eqnarray*}
p(C_{k,k-4};3)&=&\frac{2}{3}k(2k^2-12k+19)~\mbox{ for}~k\geq 4,\\
p(C_{k,k-4};4)&=&\frac{2}{3},k(k-4)(k^2-8k+18)~\mbox{ for}~k\geq 5,\\
p(C_{k,k-4};5)&=&\frac{2}{15}k(2k^4-40k^3+310k^2-1100k+1503)~\mbox{ for}~k\geq 6,\\
p(C_{k,k-4};6)&=&\frac{2}{45}k(k-6)(2k^4-48k^3+452k^2-1968k+3335)~\mbox{
for}~k\geq 7.
\end{eqnarray*}
\end{example}

\section*{Acknowledgment}
K. C. Das is supported by the National Research Foundation of the Korean government with grant No.
 2017R1D1A1B03028642. A. Ghalavand and A. R. Ashrafi are partially supported by the University of Kashan under grant number 890190/2.

\end{document}